\pdfoutput=1
\documentclass[11pt,reqno]{amsart}

\usepackage{amsmath,amssymb,amsthm}
\usepackage{geometry}
\usepackage{microtype}
\usepackage{placeins}
\usepackage{booktabs}
\usepackage{tabularx}
\usepackage{url}
\usepackage{hyperref}
\hypersetup{
    colorlinks=true,
    linkcolor=blue,
    citecolor=blue,
    urlcolor=blue,
    pdfauthor={Yuankai Guo and Xiaozhe Hu},
    pdftitle={A Counterexample to a Problem of Pommerenke on Convex Functions in the Class Sigma}
}

\newtheorem{theorem}{Theorem}[section]
\newtheorem{lemma}[theorem]{Lemma}
\newtheorem{proposition}[theorem]{Proposition}

\theoremstyle{definition}

\renewcommand{\Re}{\operatorname{Re}}

\title[A Counterexample to a Problem of Pommerenke on $\Sigma$]{A Counterexample to a Problem of Pommerenke\\on Convex Functions in the Class \texorpdfstring{$\Sigma$}{Sigma}}

\author{Yuankai Guo}
\address{}
\email{guoyk6@alumni.sysu.edu.cn}

\author{Xiaozhe Hu}
\address{}
\email{1928691854@qq.com}

\date{\today}

\subjclass[2020]{Primary 30C45; Secondary 30C55, 68V15}
\keywords{Univalent functions, class $\Sigma$, exterior convex functions, convex linear combination, Pommerenke's problem, Problem 6.10, formal verification, Lean 4}

\begin{document}

\begin{abstract}
Let $\Sigma$ denote the class of functions $f(z) = z + b_0 + b_1 z^{-1} + \cdots$ that are analytic and univalent in the exterior unit disk $\Delta^* = \{z \in \mathbb{C} : |z| > 1\}$. In 1962, Ch.~Pommerenke proved that if $F$ and $G$ are convex functions in $\Sigma$, then every convex linear combination $H = \lambda F + (1-\lambda)G$ ($0 < \lambda < 1$) remains univalent and belongs to $\Sigma$. In Hayman's problem collection (\emph{Research Problems in Function Theory}, Problem 6.10), Pommerenke raised the question of whether $H$ is necessarily also a convex function. We resolve this question in the negative by constructing an explicit counterexample in $\Sigma$ with parameters in $\mathbb{Q}(i)$. The argument is self-contained and has been formally certified in the \textsf{Lean 4} proof assistant.
\end{abstract}

\maketitle

\section{Introduction}

Let $\Delta^* = \{z \in \mathbb{C} : |z| > 1\}$ denote the exterior of the closed unit disk in the complex plane. We denote by $\Sigma$ the family of functions meromorphic and univalent in $\Delta^*$, normalized by the Laurent series expansion
\begin{equation}\label{eq:sigma-norm}
f(z) = z + b_0 + \sum_{n=1}^\infty b_n z^{-n}, \qquad z \in \Delta^*.
\end{equation}
A function $f \in \Sigma$ is called \emph{convex} (or an exterior convex function) if the complement $E = \mathbb{C} \setminus f(\Delta^*)$ is a bounded convex set. A classical analytic criterion (see, e.g., \cite{Duren1983,Pommerenke1962,Pommerenke1975}) characterizes exterior convexity: a normalized locally univalent function $f$ in $\Delta^*$ is univalent and exterior convex if and only if
\begin{equation}\label{eq:convex-criterion}
\Re\left(1 + \frac{z f''(z)}{f'(z)}\right) > 0, \qquad \forall z \in \Delta^*.
\end{equation}

In 1962, Ch.~Pommerenke \cite{Pommerenke1962} investigated geometric subclasses of $\Sigma$ and established that the family $\Sigma$ is preserved under convex linear combinations of convex functions:

\begin{theorem}[Pommerenke \cite{Pommerenke1962}]\label{thm:pommerenke-univalent}
If $F, G \in \Sigma$ are convex functions, then for any $\lambda \in (0,1)$, the linear combination
\begin{equation}
H(z) = \lambda F(z) + (1-\lambda) G(z)
\end{equation}
is univalent and belongs to $\Sigma$.
\end{theorem}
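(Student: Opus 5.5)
The plan is to prove a two-point inequality for a single convex function, one that is linear in $f$ and therefore survives convex combination. For $f\in\Sigma$ and $z_1\neq z_2$ in $\Delta^*$, put
\[
D_f(z_1,z_2)=\frac{f(z_1)-f(z_2)}{z_1-z_2},
\]
and extend it to the diagonal by $D_f(z,z)=f'(z)$. The target is the lemma: \emph{if $f\in\Sigma$ is convex, then $\Re D_f(z_1,z_2)>0$ for all $z_1,z_2\in\Delta^*$.}

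Granting the lemma, the theorem follows at once. We have $D_H=\lambda D_F+(1-\lambda)D_G$, so $\Re D_H>0$, hence $D_H\neq 0$ and $H(z_1)\neq H(z_2)$ whenever $z_1\neq z_2$. The Laurent expansion of $H$ is $z+(\lambda b_0^F+(1-\lambda)b_0^G)+\cdots$, which is the normalization \eqref{eq:sigma-norm}, so $H\in\Sigma$. As a sanity check, for $f(z)=z+1/z$ we get $D_f=1-1/(z_1z_2)$, and this has positive real part because $|z_1z_2|>1$.

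To prove the lemma, I would first reduce to maps that are smooth up to the boundary by dilation. For $r>1$ set $f_r(z)=f(rz)/r$. Then $f_r\in\Sigma$ is analytic on a neighbourhood of $\overline{\Delta^*}$, and it still satisfies \eqref{eq:convex-criterion}. Since $D_{f_r}(z_1,z_2)=D_f(rz_1,rz_2)$, it is enough to prove $\Re D_{f_r}\ge 0$ on $\overline{\Delta^*}\times\overline{\Delta^*}$ and then pass to strict inequality. For the strict version, note that $\Re D$ is pluriharmonic and tends to $1$ as $z_1,z_2\to\infty$, so by the minimum principle it cannot vanish at an interior point unless it is constant, which is impossible.

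The minimum principle also handles the reduction to the torus. The function $\Re D_{f_r}$ is pluriharmonic in $(z_1,z_2)$ on $\Delta^*\times\Delta^*$, continuous on the closure including infinity, and equal to $1$ at infinity. Applying the one-variable minimum principle in each variable separately shows that its infimum is attained on the distinguished boundary $|z_1|=|z_2|=1$. So everything reduces to the following claim: for $z_1=e^{ia}$ and $z_2=e^{ib}$, the chord $f(e^{ia})-f(e^{ib})$ of the convex curve $\Gamma=f(\partial\Delta^*)$ makes an angle of at most $\pi/2$ with $e^{ia}-e^{ib}=2i\sin\tfrac{a-b}{2}\,e^{i(a+b)/2}$.

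This boundary estimate is the main obstacle. Let $\theta(t)=t+\pi/2+\arg f'(e^{it})$ be the tangent angle of $\Gamma$. Condition \eqref{eq:convex-criterion} says exactly that $\theta$ is nondecreasing, and it satisfies $\theta(t+2\pi)=\theta(t)+2\pi$. The chord from parameter $b$ to parameter $a$ has direction lying between $\theta(b)$ and $\theta(a)$. So I need to compare an increasing "angle function" of total variation $2\pi$ with the linear one $t+\pi/2$. This is a rotation-number type comparison, and a crude bound on $\arg f'$ will not suffice.

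My first attempt would be to use instead the subordination form of the criterion. One writes $1+zf''/f'=p(z)$ with $\Re p>0$ and $p(\infty)=1$, and integrates twice to represent $f'(z)=\exp\!\big(-2\int\log(1-e^{it}/z)\,d\mu(t)\big)$ for a probability measure $\mu$. The chord estimate then becomes an inequality on the extreme points, namely the maps $z+e^{2it}/z$ onto segments, where it reduces to the sanity check above. This would work if the map $\mu\mapsto D_f$ can be controlled by the extreme points, for example via a Ruscheweyh-type duality or a Herglotz representation of $D_f$ in $z_1$ with $z_2$ fixed. I expect this extreme-point step to be the delicate part.
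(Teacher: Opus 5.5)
\paragraph{Verdict.} There is a genuine gap: the boundary inequality that carries the whole theorem is left unproved, and the route you sketch for it would not work. The paper gives no proof of this statement. It quotes Pommerenke, and Section~3 only verifies univalence of its particular $H$ via the coefficient criterion of Lemma~3.1, so your argument has to stand on its own.

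\paragraph{What is sound.} Your outer reduction is correct. You use $D_H=\lambda D_F+(1-\lambda)D_G$, the dilation $f_r$, and the minimum principle in the variables $1/z_1,1/z_2$ (where $D_f$ is holomorphic and equals $1$ whenever one variable is $0$). Together these reduce everything to $\Re D_{f_r}\ge 0$ on the torus.

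\paragraph{The gap.} That torus inequality is the entire content of the theorem, and you do not prove it. It is no easier than the theorem itself. If $F$ is convex, so is $F^*(z)=\overline{F(\bar z)}$. Univalence of $\tfrac12(F+F^*)$, which has real coefficients, forces it to map $\{|z|>1,\ \operatorname{Im} z>0\}$ into the upper half-plane. That is exactly $\Re D_F(z,\bar z)>0$, and rotations then give all pairs with $|z_1|=|z_2|$.

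\paragraph{Why your sketched route fails.}
\begin{itemize}
\item From $1+zf''/f'=\int\frac{z+e^{it}}{z-e^{it}}\,d\mu(t)$ one gets $f'(z)=\exp\bigl(+2\int\log(1-e^{it}/z)\,d\mu(t)\bigr)$; the sign is $+$, not $-$.
\item This representation requires the side condition $\int e^{it}\,d\mu(t)=0$; otherwise $f$ acquires a $\log z$ term.
\item The extreme points of that set of measures include three-atom measures (triangle maps), not only antipodal pairs (the segments $z+e^{2it}/z$).
\item Since $\mu\mapsto f$ passes through an exponential, $\mu\mapsto D_f$ is not affine. No Krein--Milman reduction to extreme points is available; this nonlinearity is exactly what the paper's counterexample exploits.
\item A Herglotz representation of $D_f(\cdot,z_2)$ would presuppose $\Re D_f>0$, so it is circular.
\end{itemize}

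\paragraph{The missing idea.} Differentiate in one variable along the circle instead of comparing the two chords directly. Fix $b$ and let $\phi(\theta)=\operatorname{Im}\log D_{f_r}(e^{i\theta},e^{ib})$. Use the branch of $\log D_{f_r}(\cdot,e^{ib})$ on $\{|z|\ge 1\}\cup\{\infty\}$ that vanishes at $\infty$; it exists because $D_{f_r}\ne 0$ there by univalence.
\begin{itemize}
\item[(i)] $\int_0^{2\pi}\phi\,d\theta=0$, by the mean value property at $\infty$.
\item[(ii)] $\phi$ is continuous, and for $\theta\ne b$,
\[
\phi'(\theta)=\frac{d}{d\theta}\arg\bigl(f_r(e^{i\theta})-f_r(e^{ib})\bigr)-\frac12\ \ge\ -\frac12 .
\]
This holds because $\arg(e^{i\theta}-e^{ib})$ increases at rate exactly $\tfrac12$, while the chord from a fixed point of a positively oriented convex curve turns monotonically counterclockwise. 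Analytically, (ii) is the statement $\Re\bigl(\frac{2zf'(z)}{f(z)-f(\zeta)}-\frac{z+\zeta}{z-\zeta}\bigr)\ge 0$.
\end{itemize}

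\paragraph{Closing the argument.} If $\phi(\theta_0)=\max\phi$, then (ii) gives $\phi(\theta_0+s)\ge \phi(\theta_0)-s/2$ for $s\in[0,2\pi]$. Integrating this against (i) yields $\max\phi\le\pi/2$; the symmetric argument at a minimum point gives $\min\phi\ge-\pi/2$. Hence $\Re D_{f_r}\ge 0$ on the torus, and your minimum-principle step finishes the proof. Segment maps, for which $\phi$ is a sawtooth of slope $-\tfrac12$, show that this bound is sharp.
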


In Walter K.~Hayman's collection \emph{Research Problems in Function Theory} \cite{Hayman1967}, and in its 50th Anniversary Edition edited by Hayman and Lingham \cite{HaymanLingham2019}, Pommerenke asked whether convexity is likewise preserved:

\begin{quote}
\textbf{Problem 6.10 (Ch.~Pommerenke \cite{Hayman1967,HaymanLingham2019}).} \emph{If $F(z), G(z)$ are convex functions in $\Sigma$, it is known that for $0 < \lambda < 1$, $H(z) = \lambda F(z) + (1 - \lambda)G(z) \in \Sigma$. Is it true that $H(z)$ is also convex?}
\end{quote}

In the 2019 edition \cite[Update 6.10]{HaymanLingham2019}, the status is recorded as: \emph{``Contrary to previous updates, no progress on this problem has been reported to us.''}

In this note, we answer Pommerenke's question in the negative.

\begin{theorem}\label{thm:main}
There exist convex functions $F, G \in \Sigma$ and a parameter $\lambda \in (0, 1)$ such that the convex linear combination $H = \lambda F + (1-\lambda)G$ is not convex.
\end{theorem}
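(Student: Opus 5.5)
I will look for the counterexample among the simplest exterior convex maps, namely \emph{polygonal} ones with few vertices, or even simpler, rational maps whose derivatives are finite products. A convenient family: for $|a|<1$ consider
\begin{equation*}
f_a'(z) = \left(1 - \frac{a}{z}\right)\left(1-\frac{\bar a^{-1}\cdot 0}{z}\right)\cdots,
\end{equation*}
but I will fix concrete choices: take $F$ and $G$ of the form $F(z)=z+\frac{b}{z}$ and $G(z)=z+\frac{c}{z^{2}}\cdot(\text{rotation})$ or more generally low-degree polynomials in $1/z$, $f(z)=z+\sum_{n=1}^{N} b_n z^{-n}$, with coefficients in $\mathbb{Q}(i)$. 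For such $f$ the quantity $1+zf''/f'$ is rational in $z$, so the criterion \eqref{eq:convex-criterion} becomes a question about the sign of $\Re$ of an explicit rational function on $|z|\ge 1$. Since $1+zf''/f'\to 1$ as $z\to\infty$ and is harmonic-in-real-part away from zeros of $f'$, by the minimum principle it suffices (once $f'\neq 0$ in $\overline{\Delta^*}$) to check $\Re(1+zf''/f')\ge 0$ on $|z|=1$, with strict inequality in $\Delta^*$ following automatically (non-constant harmonic function). Clearing denominators, on $z=e^{i\theta}$ this is nonnegativity of a real trigonometric polynomial $P_f(\theta)$, which is verifiable by exact certificates (e.g. sum-of-squares or interval subdivision), matching the Lean-friendly flavor.

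The key steps, in order: (1) choose $F(z)=z+\alpha/z$ with $|\alpha|<1$ small-ish (an ellipse-type exterior map, convex when $|\alpha|\le 1$ suitably; one checks $1+zF''/F' = (z^2+\alpha)/(z^2-\alpha)$ has positive real part for $|z|>1$) and $G(z)=z+\beta/z^{2}$ or $z+\beta z^{-2}+\gamma z^{-1}$, tuned so that $G$ is convex but near the boundary of convexity (e.g. the coefficient chosen so $P_G$ nearly vanishes at some $\theta_0$); (2) verify $F'$, $G'$ have no zeros in $|z|\ge 1$ and $P_F,P_G\ge 0$ on the circle, giving convexity of $F,G$ in $\Sigma$; (3) take $\lambda=1/2$ (or a rational $\lambda$), form $H$, and exhibit an explicit point $z_0$ with $|z_0|>1$ (say $z_0=\tfrac{11}{10}e^{i\theta_0}$ with rational coordinates in $\mathbb{Q}(i)$) at which $\Re(1+z_0H''(z_0)/H'(z_0))<0$; by the criterion \eqref{eq:convex-criterion} (the ``only if'' direction), $H$ is not convex, while Theorem~\ref{thm:pommerenke-univalent} already guarantees $H\in\Sigma$, so $H$ is a genuine instance.

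The heuristic guiding the choice: the curvature $1+zf''/f'$ is not linear in $f$, since $H''/H'$ is a weighted average of $F''/F'$ and $G''/G'$ with weights $\lambda F'/H'$ and $(1-\lambda)G'/H'$, which are complex. So I want $F'$ and $G'$ to have substantially different arguments at some boundary point where one of them has near-zero curvature; choosing the ``directions'' $\alpha,\beta$ rotated relative to each other (e.g. $\alpha$ real, $\beta$ imaginary) makes these weights rotate the curvature term into the negative half-plane.

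The main obstacle is step (1)/(3): actually finding parameters where the cancellation happens, since for very simple pairs (two ellipses, for instance, where everything is degree $1$ in $1/z$) the combination stays convex. I would first do a numerical search over $\alpha,\beta$ (and possibly an added $z^{-1}$ term in $G$) near the convexity boundary, minimizing $\min_\theta \Re(1+zH''/H')$ at $|z|=1$, then round the witnessing parameters to nearby Gaussian rationals and re-verify both the convexity of $F,G$ with a margin and the negativity at $z_0$ exactly.
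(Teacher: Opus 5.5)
\paragraph{The missing witness.} What you have written is a search plan, not a proof. The statement is purely existential, so its entire content is an explicit witness $(F,G,\lambda,z_0)$ together with a rigorous check that $\Re(1+z_0H''(z_0)/H'(z_0))<0$. You fix none of these. The step you yourself call the main obstacle is deferred to a numerical search that is never carried out.

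The paper supplies this witness:
\begin{itemize}
\item $F(z)=z+\alpha/z$ with $\alpha=\frac{8}{9}+\frac{4}{9}i$;
\item a near-triangle map $G'(z)=(1-\beta z^{-3})^{2/3}$ with $\beta=(1-q^3)/r^3$, $q=\frac{1}{3}-\frac{1}{6}i$, $r=\frac{399}{400}$, so $|\beta|\approx 0.9995$;
\item $\lambda=\frac{3}{5}$ and the test point $z_0=\frac{400}{399}$.
\end{itemize}
An exact computation in $\mathbb{Q}(i)$ then gives real part $-54160961609/69013985609$.

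\paragraph{Where your guidance fails.} Write $1+zf''/f'=(A+B)/(A-B)$ with $A=f'+\frac{1}{2}zf''$ and $B=\frac{1}{2}zf''$. Then convexity is equivalent to $|zf''(z)|<|2f'(z)+zf''(z)|$ on $\Delta^*$, and both sides are linear in $f$. This has three consequences.
\begin{itemize}
\item For $f=z+\sum b_nz^{-n}$, the condition $\sum n^2|b_n|\le 1$ is sufficient for convexity. It is exact for monomials $z+bz^{-n}$, and it is preserved under convex combinations. So your first family, $F=z+\alpha/z$ and $G=z+\beta z^{-2}$, can never produce a counterexample, for any $\alpha,\beta,\lambda$: here $\lambda|\alpha|+4(1-\lambda)|\beta|\le 1$.
\item A failure of $H$ requires $F$ and $G$ to be nearly extremal ($|zf''|\approx|2f'+zf''|$) at a common point. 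At that point $zF''$ and $zG''$ must be nearly parallel, while $2F'+zF''$ and $2G'+zG''$ are not.
\item For $F=z+\alpha/z$ the ratio $|zF''|/|2F'+zF''|$ equals $|\alpha|/|z|^2$. So $|\alpha|$ must be close to $1$, not ``small-ish'' as you propose.
\end{itemize}
The paper's data sit exactly in this regime. At $z_0$ one has $2F'+zF''=2$ and $2G'+zG''=2/q$, whose arguments differ by $\arctan\frac{1}{2}$. Meanwhile $zF''=2\alpha r^2$ and $zG''=2w/q$ are nearly parallel, with ratios about $0.989$ and $0.992$.

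\paragraph{Your extended family.} The family $z+\gamma z^{-1}+\beta z^{-2}$ can in fact be made to work. Take $G$ with a point of zero boundary curvature at $z=1$ where $\beta\notin\mathbb{R}$, choose $\alpha$ parallel to $\gamma+3\beta$, and let $|\alpha|$ be very close to $1$. However, in this family convexity forces $|\beta|\le\frac{1}{4}$, so the phase mismatch of $2G'+zG''=2(1+\beta z^{-3})$ is at most $\arcsin\frac{1}{4}$. The effect is second order in that mismatch, so the margins are tiny. Nothing in your proposal locates this regime, and as written the argument does not prove the theorem.
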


The construction uses parameters in $\mathbb{Q}(i)$, with the failure of the convexity criterion at the evaluation point established by direct algebraic calculation. The complete proof is also formalized in the \textsf{Lean 4} interactive theorem prover (see Section~\ref{sec:formalization}).

\section{Construction of the Functions \texorpdfstring{$F$}{F} and \texorpdfstring{$G$}{G}}

We introduce the following parameters in the Gaussian rational field $\mathbb{Q}(i)$ and $\mathbb{Q}$:
\begin{align}
q &= \frac{1}{3} - \frac{1}{6}i, \label{eq:def-q}\\
w &= 1 - q^3 = 1 - \left(\frac{1}{108} - \frac{11}{216}i\right) = \frac{107}{108} + \frac{11}{216}i, \label{eq:def-w}\\
r &= \frac{399}{400} = 0.9975, \label{eq:def-r}\\
\alpha &= \frac{8}{9} + \frac{4}{9}i, \qquad \beta = \frac{w}{r^3}. \label{eq:def-alpha-beta}
\end{align}

\begin{lemma}\label{lem:modulus}
The parameters $\alpha$ and $\beta$ satisfy $|\alpha| < 1$ and $|\beta| < 1$.
\end{lemma}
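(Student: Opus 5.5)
Both inequalities reduce to comparing rational numbers, so the plan is to compare squared moduli, which avoids square roots and keeps everything in $\mathbb{Q}$.

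For $\alpha = \frac{8}{9} + \frac{4}{9}i$ this is immediate:
\[
|\alpha|^2 = \frac{64}{81} + \frac{16}{81} = \frac{80}{81} < 1 .
\]

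For $\beta = w/r^3$ we have $|\beta|^2 = |w|^2 / r^6$, since $r>0$. So $|\beta|<1$ is equivalent to $|w|^2 < r^6$. From \eqref{eq:def-w}, putting both parts over the denominator $216$,
\[
|w|^2 = \Bigl(\frac{214}{216}\Bigr)^2 + \Bigl(\frac{11}{216}\Bigr)^2 = \frac{214^2 + 11^2}{216^2} = \frac{45917}{46656}.
\]
Here $w$ is read off from \eqref{eq:def-w}; one can first re-derive $q^3 = \frac{1}{108} - \frac{11}{216}i$ from \eqref{eq:def-q} as a sanity check.

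This is the only step needing care, because the margin is thin: $|w|^2 \approx 0.98416$ while $r^6 \approx 0.98509$. To avoid expanding $399^6$, I will use Bernoulli's inequality $(1-x)^6 \ge 1 - 6x$ for $0 \le x \le 1$, with $x = 1/400$. This gives
\[
r^6 \ge 1 - \frac{6}{400} = \frac{197}{200}.
\]
It then remains to check $\frac{45917}{46656} < \frac{197}{200}$. Cross-multiplying, this reads $45917 \cdot 200 = 9183400 < 197 \cdot 46656 = 9191232$, which is true.

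Hence $|w|^2 < r^6$, so $|\beta| < 1$. All steps are finite rational computations, which also fits the Lean certification mentioned earlier.
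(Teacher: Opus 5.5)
Your proof is correct and follows the paper's route: compare squared moduli, get $|\alpha|^2 = 80/81$ and $|w|^2 = 45917/46656$, and reduce $|\beta|<1$ to $|w|^2 < r^6$. The only difference is in that last check: the paper computes $r^6 - |w|^2 = \frac{2785244627851129}{2985984000000000000}$ exactly, whereas your Bernoulli bound $r^6 \ge 197/200$ avoids expanding $399^6$ and still leaves enough room ($9183400 < 9191232$).
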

\begin{proof}
For $\alpha$, a direct calculation gives
\begin{equation}
|\alpha|^2 = \left(\frac{8}{9}\right)^2 + \left(\frac{4}{9}\right)^2 = \frac{64+16}{81} = \frac{80}{81} < 1.
\end{equation}
For $\beta$, since $|\beta|^2 = |w|^2 / r^6$, we compute
\begin{equation}
|w|^2 = \left(\frac{107}{108}\right)^2 + \left(\frac{11}{216}\right)^2 = \frac{45796 + 121}{46656} = \frac{45917}{46656}.
\end{equation}
A direct calculation shows that
\begin{equation}
r^6 - |w|^2 = \left(\frac{399}{400}\right)^6 - \frac{45917}{46656} = \frac{2785244627851129}{2985984000000000000} > 0.
\end{equation}
Thus $|w|^2 < r^6$, whence $|\beta| < 1$.
\end{proof}

\begin{proposition}\label{prop:F}
The function
\begin{equation}\label{eq:def-F}
F(z) = z + \frac{\alpha}{z}
\end{equation}
belongs to $\Sigma$ and is convex in $\Sigma$.
\end{proposition}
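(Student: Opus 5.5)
The plan is to apply the analytic criterion \eqref{eq:convex-criterion} directly to $F(z)=z+\alpha/z$, using Lemma~\ref{lem:modulus} ($|\alpha|<1$). The normalization \eqref{eq:sigma-norm} is immediate: $b_0=0$, $b_1=\alpha$, and all other coefficients vanish. Local univalence comes first. We have $F'(z)=1-\alpha z^{-2}$, and for $|z|>1$ this gives $|\alpha z^{-2}|<|\alpha|<1$, so $F'(z)\neq 0$ throughout $\Delta^*$. The only singularity of $F$ in $\Delta^*\cup\{\infty\}$ is the simple pole at $\infty$, as required.

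Next comes the criterion quantity. Since $F''(z)=2\alpha z^{-3}$, we get
\begin{equation*}
1+\frac{zF''(z)}{F'(z)} = 1+\frac{2\alpha z^{-2}}{1-\alpha z^{-2}} = \frac{1+u}{1-u}, \qquad u=\alpha z^{-2}.
\end{equation*}
For $z\in\Delta^*$ we have $|u|<1$, and the Möbius map $u\mapsto (1+u)/(1-u)$ sends the unit disk onto the right half-plane. Explicitly,
\begin{equation*}
\Re\frac{1+u}{1-u}=\frac{1-|u|^2}{|1-u|^2}>0 .
\end{equation*}
Hence \eqref{eq:convex-criterion} holds on all of $\Delta^*$. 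By the classical criterion, $F$ is then univalent, so $F\in\Sigma$, and it is exterior convex.

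There is essentially no obstacle here. The only points needing care are checking the hypotheses of the criterion: $F$ is normalized, $F'$ is nonvanishing, and the strict inequality uses $|u|<1$, which in turn rests on $|\alpha|^2=80/81<1$ from Lemma~\ref{lem:modulus}.

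As an optional sanity check, one can argue geometrically: $F$ maps $|z|=1$ onto an ellipse-type Joukowski image (for $\alpha\ne0$, a rotated ellipse with semi-axes $1\pm|\alpha|$). Its complement in $\Delta^*$'s image is the closed elliptical region, which is bounded and convex. This confirms the analytic conclusion independently.
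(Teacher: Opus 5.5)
Your proof is correct, and its convexity computation, $1+zF''/F'=(1+u)/(1-u)$ with $|u|<1$, is the same as the paper's. The one difference is the univalence step. The paper proves univalence directly from the factorization $F(z_1)-F(z_2)=(z_1-z_2)\bigl(1-\alpha/(z_1z_2)\bigr)$ with $|\alpha/(z_1z_2)|<1$. You instead deduce univalence from the criterion after checking that $F'\neq 0$, which is legitimate since the paper states the criterion for normalized locally univalent functions.
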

\begin{proof}
The function $F$ is meromorphic in $\Delta^*$ with the expansion $F(z) = z + \alpha z^{-1}$. For any distinct $z_1, z_2 \in \Delta^*$,
\begin{equation}
F(z_1) - F(z_2) = (z_1 - z_2)\left(1 - \frac{\alpha}{z_1 z_2}\right).
\end{equation}
Since $|z_1| > 1$, $|z_2| > 1$, and $|\alpha| < 1$, we have $|\alpha / (z_1 z_2)| < 1$, ensuring $F(z_1) \neq F(z_2)$. Thus $F$ is univalent in $\Delta^*$ and belongs to $\Sigma$.

Differentiating $F(z)$ yields $F'(z) = 1 - \alpha z^{-2}$ and $F''(z) = 2\alpha z^{-3}$. Hence,
\begin{equation}
1 + \frac{z F''(z)}{F'(z)} = 1 + \frac{2\alpha z^{-2}}{1 - \alpha z^{-2}} = \frac{1 + \alpha z^{-2}}{1 - \alpha z^{-2}}.
\end{equation}
Since $|\alpha z^{-2}| < |\alpha| < 1$ for all $z \in \Delta^*$, the Möbius mapping $u \mapsto \frac{1+u}{1-u}$ maps the unit disk to the open right half-plane:
\begin{equation}
\Re\left(1 + \frac{z F''(z)}{F'(z)}\right) = \frac{1 - |\alpha z^{-2}|^2}{|1 - \alpha z^{-2}|^2} > 0, \qquad \forall z \in \Delta^*.
\end{equation}
By \eqref{eq:convex-criterion}, $F$ is a convex function in $\Sigma$.
\end{proof}

\begin{proposition}\label{prop:G}
Let $G(z)$ be defined via its derivative
\begin{equation}\label{eq:def-Gprime}
G'(z) = (1 - \beta z^{-3})^{2/3}, \qquad z \in \Delta^*,
\end{equation}
with the principal branch satisfying $G'(\infty) = 1$, and normalized so that $G(z) = z + O(z^{-2})$ as $z \to \infty$. Then $G$ is single-valued, belongs to $\Sigma$, and is convex in $\Sigma$.
\end{proposition}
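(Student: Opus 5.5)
Since $|\beta|<1$ by Lemma~\ref{lem:modulus}, for $z\in\Delta^*$ we have $|\beta z^{-3}|<1$. So $1-\beta z^{-3}$ lies in the disk $\{|u-1|<1\}$, which is contained in the right half-plane. Hence the principal power $(1-\beta z^{-3})^{2/3}$ is analytic and single-valued on $\Delta^*$, including at $\infty$ with value $1$, and never vanishes. The plan is to expand it as a binomial series
\[
G'(z)=\sum_{k\ge 0}\binom{2/3}{k}(-\beta)^k z^{-3k},
\]
which converges absolutely and locally uniformly for $|z|>|\beta|^{1/3}$, a region that contains $\overline{\Delta^*}$. We then integrate term by term:
\[
G(z)=z+\sum_{k\ge1}\binom{2/3}{k}\frac{(-\beta)^k}{1-3k}z^{1-3k}.
\]
No term $z^{-1}$ appears, since $3k-1\neq 1$, so there is no logarithm. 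This gives a single-valued analytic function with the normalization $G(z)=z+O(z^{-2})$ and $b_0=0$. Equivalently, $G'$ has zero residue at $\infty$, so $G$ has no period around the origin.

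Next compute the convexity expression. Logarithmic differentiation gives
\[
\frac{G''(z)}{G'(z)}=\frac{2}{3}\cdot\frac{3\beta z^{-4}}{1-\beta z^{-3}},
\]
and therefore
\[
1+\frac{zG''(z)}{G'(z)}=1+\frac{2u}{1-u}=\frac{1+u}{1-u},\qquad u=\beta z^{-3}.
\]
Exactly as in the proof of Proposition~\ref{prop:F}, $|u|<|\beta|<1$ on $\Delta^*$ yields
\[
\Re\left(1+\frac{zG''}{G'}\right)=\frac{1-|u|^2}{|1-u|^2}>0 .
\]
Since $G'\neq0$, $G$ is locally univalent and normalized. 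The criterion \eqref{eq:convex-criterion} then gives that $G$ is univalent and exterior convex, so in particular $G\in\Sigma$.

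The only delicate point is single-valuedness and the appeal to the criterion, which presupposes a single-valued, locally univalent, normalized function on $\Delta^*$. Both are handled by the Laurent expansion above, because the absence of a $z^{-1}$ term in $G'$ guarantees a single-valued primitive. If one prefers to avoid the criterion for univalence, one can argue directly. Take $r'\in(1,\infty)$ and the circle $|z|=r'$. The tangent angle $\arg(izG'(z))$ is increasing, since its derivative in $\theta$ is $\Re(1+zG''/G')>0$, and its total change is $2\pi$. Hence each image curve is a convex Jordan curve, and letting $r'\downarrow1$ gives univalence by the argument principle.
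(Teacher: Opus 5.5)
Your proposal is correct and follows essentially the same route as the paper. The value $1-\beta z^{-3}$ stays in $\{|\zeta-1|<1\}$, so $G'$ is a single-valued non-vanishing branch; the binomial series contains only the powers $z^{-3k}$, so term-by-term integration gives the single-valued primitive $G(z)=z+\frac{1}{3}\beta z^{-2}+\cdots$; and $1+zG''/G'=(1+u)/(1-u)$ with $|u|<1$, combined with the criterion \eqref{eq:convex-criterion}, yields univalence and convexity (your closing tangent-angle argument is a valid but optional substitute for the univalence part of that criterion).
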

\begin{proof}
For any $z \in \Delta^*$, since $|\beta| < 1$, we have $|\beta z^{-3}| < 1$. The quantity $1 - \beta z^{-3}$ lies entirely within the disk $D(1, 1) = \{\zeta \in \mathbb{C} : |\zeta - 1| < 1\}$, which is simply connected and disjoint from $(-\infty, 0]$. Thus $G'(z) = (1 - \beta z^{-3})^{2/3}$ is analytic, non-vanishing, and single-valued in $\Delta^*$.

Expanding $G'(z)$ via the binomial theorem (the series converges locally uniformly on $\Delta^*$),
\begin{equation}
G'(z) = 1 - \frac{2}{3}\beta z^{-3} + \sum_{k=2}^\infty \binom{2/3}{k} (-\beta)^k z^{-3k}.
\end{equation}
The powers of $z$ appearing in $G'(z)$ are exclusively of the form $z^{-3k}$ ($k \ge 0$). In particular, the residue at infinity is zero (the coefficient of $z^{-1}$ vanishes). Since the fundamental group of $\Delta^*$ is generated by a circle around the origin, this vanishing residue makes the period of $G'$ around the generator zero; hence $G'$ has a single-valued analytic primitive on $\Delta^*$. Integrating the locally uniformly convergent series term by term gives
\begin{equation}
G(z) = z + \sum_{k=1}^\infty \binom{2/3}{k} \frac{(-\beta)^k}{1 - 3k} z^{1 - 3k} = z + \frac{1}{3}\beta z^{-2} + \frac{1}{45}\beta^2 z^{-5} + \cdots,
\end{equation}
which is single-valued and analytic in $\Delta^*$, satisfying the normalization for $\Sigma$.

Taking the logarithmic derivative of $G'(z)$,
\begin{equation}
\frac{G''(z)}{G'(z)} = \frac{d}{dz}\left[\frac{2}{3}\log(1 - \beta z^{-3})\right] = \frac{2\beta z^{-4}}{1 - \beta z^{-3}},
\end{equation}
whence
\begin{equation}
1 + \frac{z G''(z)}{G'(z)} = 1 + \frac{2\beta z^{-3}}{1 - \beta z^{-3}} = \frac{1 + \beta z^{-3}}{1 - \beta z^{-3}}.
\end{equation}
Since $|\beta z^{-3}| < |\beta| < 1$ for all $z \in \Delta^*$, we obtain
\begin{equation}
\Re\left(1 + \frac{z G''(z)}{G'(z)}\right) = \frac{1 - |\beta z^{-3}|^2}{|1 - \beta z^{-3}|^2} > 0, \qquad \forall z \in \Delta^*.
\end{equation}
By the exterior convexity criterion \eqref{eq:convex-criterion}, this strictly positive real part implies that $G$ is univalent in $\Delta^*$, belongs to $\Sigma$, and is convex.
\end{proof}

\section{Failure of Convexity for the Combination}

We now consider the convex linear combination with weight $\lambda = 3/5$:
\begin{equation}
H(z) = \frac{3}{5} F(z) + \frac{2}{5} G(z).
\end{equation}

Although the univalence of $H$ is guaranteed by Pommerenke's general theorem (Theorem~\ref{thm:pommerenke-univalent}), we include an elementary coefficient-based proof that is self-contained and directly suited for formal verification.

\begin{lemma}[Weighted Laurent-tail criterion]\label{lem:laurent-tail}
Let
\begin{equation}
f(z)=z+\sum_{n=0}^{\infty}a_n z^{-e_n}, \qquad z\in\Delta^*,
\end{equation}
where $e_n\in\mathbb{N}$, $e_n\geq 1$, the series converges locally uniformly on $\Delta^*$, and
\begin{equation}\label{eq:tail-bound}
S:=\sum_{n=0}^{\infty}e_n|a_n|<1.
\end{equation}
Then $f$ is injective on $\Delta^*$.
\end{lemma}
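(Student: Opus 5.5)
The plan is to follow the factorization used in the proof of Proposition~\ref{prop:F}. Fix distinct $z_1, z_2 \in \Delta^*$ and write
\begin{equation*}
f(z_1) - f(z_2) = (z_1 - z_2) + \sum_{n=0}^{\infty} a_n\left(z_1^{-e_n} - z_2^{-e_n}\right).
\end{equation*}
Since both series converge, the difference may be taken termwise. We show that the tail sum has modulus strictly smaller than $|z_1 - z_2|$, so the difference cannot vanish.

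For each $n$ with $e = e_n \ge 1$, use the elementary identity
\begin{equation*}
z_1^{-e} - z_2^{-e} = \frac{z_2^{e} - z_1^{e}}{z_1^{e} z_2^{e}} = -(z_1 - z_2)\,\frac{\sum_{j=0}^{e-1} z_1^{j} z_2^{e-1-j}}{z_1^{e} z_2^{e}} = -(z_1-z_2)\sum_{j=0}^{e-1} z_1^{j-e} z_2^{-1-j}.
\end{equation*}
Each of the $e$ summands $z_1^{j-e} z_2^{-1-j}$ has modulus $|z_1|^{-(e-j)}|z_2|^{-(j+1)}$. Both exponents are at least $1$, so each summand has modulus strictly less than $1$, and in fact at most $\rho^{-2}$ where $\rho = \min(|z_1|,|z_2|) > 1$. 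Therefore
\begin{equation*}
|z_1^{-e_n} - z_2^{-e_n}| \le e_n\,\rho^{-2}\,|z_1 - z_2|.
\end{equation*}

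Summing over $n$ and using \eqref{eq:tail-bound} gives
\begin{equation*}
|f(z_1) - f(z_2)| \ge |z_1 - z_2|\Bigl(1 - \rho^{-2}\sum_{n} e_n |a_n|\Bigr) = |z_1 - z_2|(1 - S\rho^{-2}) > 0,
\end{equation*}
because $S\rho^{-2} < S < 1$. Hence $f(z_1) \neq f(z_2)$, so $f$ is injective.

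The only points needing care are the following.
\begin{itemize}
\item The pointwise convergence of $\sum a_n z^{-e_n}$ at $z_1$ and $z_2$ justifies splitting the difference of the two sums into the sum of differences. This follows from the assumed convergence on $\Delta^*$.
\item The per-term bound is then summed using the absolute convergence guaranteed by $S < 1$.
\end{itemize}
The main step is the uniform per-term estimate, which reduces to the geometric-sum factorization above. Strict inequality is automatic because $\rho > 1$; even the cruder bound with factor $1$ in place of $\rho^{-2}$ suffices, since $S < 1$ strictly.
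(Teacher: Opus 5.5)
Your proof is correct and is essentially the paper's argument. Your factorization of $z_1^{-e}-z_2^{-e}$ is the telescoping identity $|u^{e}-v^{e}|\le e|u-v|$ for $u=z_1^{-1}$, $v=z_2^{-1}$, combined with $|u-v|=|z_1-z_2|/|z_1z_2|$; the paper drops the extra factor $\rho^{-2}$ that you keep and concludes from $(1-S)|z_1-z_2|\le 0$ (one trivial slip on your side: $S\rho^{-2}<S$ fails when $S=0$, but $S\rho^{-2}\le S<1$ is all you need).
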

\begin{proof}
For $z,w\in\Delta^*$ put $u=z^{-1}$ and $v=w^{-1}$. Since $|u|,|v|\leq 1$, the telescoping identity gives
\[
|u^m-v^m|\leq m|u-v| \qquad(m\geq 1),
\]
while
\[
|u-v|=\frac{|z-w|}{|zw|}\leq |z-w|.
\]
Consequently, absolute convergence and the triangle inequality imply
\[
\left|\sum_{n=0}^{\infty}a_n\bigl(z^{-e_n}-w^{-e_n}\bigr)\right|
\leq S|z-w|.
\]
If $f(z)=f(w)$, then $(1-S)|z-w|\leq 0$. Since $S<1$, this forces $z=w$.
\end{proof}

For the function $H$, set $e_0=1$, $a_0=\frac35\alpha$, and, for $n\geq1$,
\[
e_n=3n-1, \qquad
a_n=\frac25\binom{2/3}{n}\frac{(-\beta)^n}{1-3n}.
\]
The Laurent expansion of $G$ in \eqref{eq:def-Gprime} yields
\[
H(z)=z+\sum_{n=0}^{\infty}a_nz^{-e_n}.
\]
Since $|1-3n| = 3n-1$ for $n \ge 1$, the denominator cancels the weight $e_n$, giving
\begin{equation}\label{eq:H-tail-bound}
\sum_{n=0}^{\infty}e_n|a_n|
=\frac35|\alpha|+\frac25\sum_{n=1}^{\infty}
\left|\binom{2/3}{n}\right||\beta|^n.
\end{equation}
Recall that for $s = 2/3$, we have $\binom{s}{1} = 2/3 > 0$ and $\binom{s}{n} = (-1)^{n-1}\left|\binom{s}{n}\right|$ for all $n \ge 1$. Thus the binomial expansion gives the identity
\begin{equation}\label{eq:binomial-identity}
\sum_{n=1}^{\infty} \left|\binom{2/3}{n}\right| |\beta|^n
= -\sum_{n=1}^{\infty} \binom{2/3}{n} (-|\beta|)^n
= 1 - (1 - |\beta|)^{2/3}.
\end{equation}
Since $|\alpha| < 1$ and $|\beta| < 1$ by Lemma~\ref{lem:modulus}, we obtain
\begin{equation}
\sum_{n=0}^{\infty}e_n|a_n| < \frac{3}{5} + \frac{2}{5}\left(1 - (1 - |\beta|)^{2/3}\right) < \frac{3}{5} + \frac{2}{5} = 1.
\end{equation}
By Lemma~\ref{lem:laurent-tail}, $H$ is univalent on $\Delta^*$ and belongs to $\Sigma$.

We now show that $H$ violates the exterior convexity criterion \eqref{eq:convex-criterion}.

\begin{proof}[Proof of Theorem~\ref{thm:main}]
We evaluate the convexity criterion at the test point
\begin{equation}
z_0 = \frac{1}{r} = \frac{400}{399} \in \Delta^*.
\end{equation}
At this point, $\beta z_0^{-3} = \beta r^3 = w$. By definition, $1 - w = q^3$, where $q = \frac{1}{3} - \frac{1}{6}i$. Since $\arg q \in (-\pi/6, 0)$, we have $3\arg q \in (-\pi/2, 0) \subset (-\pi, \pi)$. Under the principal branch specified in Proposition~\ref{prop:G},
\begin{equation}
G'(z_0) = (1 - w)^{2/3} = (q^3)^{2/3} = q^2 = \left(\frac{1}{3} - \frac{1}{6}i\right)^2 = \frac{1}{12} - \frac{1}{9}i.
\end{equation}
The first derivative of $H$ at $z_0$ is
\begin{align}
D := H'(z_0) &= \frac{3}{5} F'(z_0) + \frac{2}{5} G'(z_0) \nonumber\\
&= \frac{3}{5}(1 - \alpha r^2) + \frac{2}{5} q^2 \nonumber\\
&= \frac{3}{5}\left(1 - \left(\frac{8}{9} + \frac{4}{9}i\right)\left(\frac{399}{400}\right)^2\right) + \frac{2}{5}\left(\frac{1}{12} - \frac{1}{9}i\right) \nonumber\\
&= \frac{184794 - 557603i}{1800000} \neq 0.
\end{align}

For the second derivatives at $z_0$,
\begin{equation}
z_0 F''(z_0) = 2\alpha r^2, \qquad z_0 G''(z_0) = \frac{2\beta z_0^{-3}}{(1 - \beta z_0^{-3})^{1/3}} = \frac{2w}{(1 - w)^{1/3}} = \frac{2w}{q}.
\end{equation}
Setting
\begin{equation}
A := \frac{3}{5}\alpha r^2 + \frac{2}{5}\frac{w}{q} = \frac{3}{5}\alpha r^2 + \frac{2}{5}\left(\frac{1}{q} - q^2\right),
\end{equation}
we have $z_0 H''(z_0) = 2A$. The curvature quantity at $z_0$ can then be expressed as
\begin{equation}
1 + \frac{z_0 H''(z_0)}{H'(z_0)} = 1 + \frac{2A}{D} = \frac{D + 2A}{D},
\end{equation}
where
\begin{equation}
D + 2A = \frac{3}{5}(1 + \alpha r^2) + \frac{2}{5}\left(\frac{2}{q} - q^2\right).
\end{equation}
A direct calculation in $\mathbb{Q}(i)$ gives
\begin{equation}
\frac{D + 2A}{D} = \frac{-54160961609 + 690164496000\,i}{69013985609}.
\end{equation}
In particular,
\begin{equation}
\Re\left(1 + \frac{z_0 H''(z_0)}{H'(z_0)}\right) = -\frac{54160961609}{69013985609} \approx -0.784782405 < 0.
\end{equation}
Because this real part is strictly negative, $H$ violates the exterior convexity criterion \eqref{eq:convex-criterion}, which completes the proof.
\end{proof}

\section{Formal Verification in Lean 4}\label{sec:formalization}

The algebraic, analytic, and geometric arguments presented above have been formalized in the \textsf{Lean 4} interactive theorem prover, utilizing the mathematical library \textsf{Mathlib} \cite{Mathlib}. The repository comprises 32 Lean source files (approximately 6,885 lines of code), introduces no additional user-defined axioms, and contains no unproved placeholders (\texttt{sorry} or \texttt{admit}).

The formal development covers the essential analytic and algebraic steps. The evaluation of $1 + z_0 H''(z_0)/H'(z_0)$ at $z_0 = 400/399$ is certified over $\mathbb{Q}(i)$ (\path{pommerenke_exact_rational_curvature_certificate}). The complex power branch choice and the value $G'(z_0) = q^2$ are verified through the angular bound $\arg q \in (-\pi/6, 0)$ (\path{GPrime_at_test_point}), while the residue vanishing for $G$ is established in \path{Problem610GlobalPrimitive.lean}. The weighted Laurent-tail criterion (Lemma~\ref{lem:laurent-tail}) is proved in \path{ExteriorUnivalenceCriterion.lean} and applied to the binomial series to yield the univalence of $H$ (\path{H_injOn_exterior}). Lastly, the exterior convexity of $F$ and $G$ is obtained from their boundary parameterizations and covering properties.

Table~\ref{tab:formalization} summarizes the correspondence between the main statements of this paper and their formal counterparts. The complete \textsf{Lean 4} formalization and build instructions are publicly available at \url{https://github.com/Theophilus1030/Pommerenke}.

\begin{table}[htbp]
\centering
\footnotesize
\caption{Correspondence between paper results and formal \textsf{Lean 4} theorems.}
\begin{tabularx}{\textwidth}{@{}>{\raggedright\arraybackslash}X>{\raggedright\arraybackslash}X>{\raggedright\arraybackslash}X@{}}
\toprule
\textbf{Paper Statement} & \textbf{\textsf{Lean 4} Theorem Identifier} & \textbf{Primary Source File} \\
\midrule
Lemma~\ref{lem:modulus} & \path{alpha_normSq_lt_one}, \path{beta_normSq_lt_one} & \path{Problem610Counterexample.lean} \\
Proposition~\ref{prop:F} & \path{smoothExteriorConvexMap_F}, \path{FExteriorSigmaData} & \path{Problem610FGlobal.lean} \\
Proposition~\ref{prop:G} & \path{smoothExteriorConvexMap_G}, \path{GExteriorSigmaData} & \path{Problem610GlobalAngle.lean} \\
Lemma~\ref{lem:laurent-tail} & \path{injOn_of_hasInversePowerSeries_of_weighted_norm_lt_one} & \path{ExteriorUnivalenceCriterion.lean} \\
Specific univalence of $H$ & \path{HExteriorSigmaData}, \path{H_injOn_exterior} & \path{MainTheorem.lean} \\
Theorem~\ref{thm:main} & \path{pommerenke_combination_not_convex} & \path{MainTheorem.lean} \\
Certificate & \path{pommerenke_complete_counterexample_certificate} & \path{MainTheorem.lean} \\
\bottomrule
\end{tabularx}
\label{tab:formalization}
\end{table}

\FloatBarrier
\section{Concluding Remarks}

It is instructive to contrast the behavior of convex combinations in $\Sigma$ with that in the class $\mathcal{S}$ of normalized univalent functions in the unit disk $\mathbb{D} = \{z \in \mathbb{C} : |z| < 1\}$. For the class $\mathcal{S}$, a convex combination of two convex univalent functions need not even be univalent (see, e.g., Goodman \cite{Goodman1979} and Tremblay \cite{Tremblay1974}). In $\Sigma$, by contrast, Pommerenke's theorem \cite{Pommerenke1962} ensures that univalence is always preserved under convex combinations.

The failure of convexity preservation established in Theorem~\ref{thm:main} can be understood geometrically through the curvature identity
\begin{equation}
1 + \frac{z H''(z)}{H'(z)} = \mu(z) \left(1 + \frac{z F''(z)}{F'(z)}\right) + (1 - \mu(z)) \left(1 + \frac{z G''(z)}{G'(z)}\right),
\end{equation}
where the variable weight
\begin{equation}
\mu(z) = \frac{\lambda F'(z)}{\lambda F'(z) + (1-\lambda) G'(z)}
\end{equation}
is complex-valued rather than real. Even though the individual curvature quantities $1 + z F''/F'$ and $1 + z G''/G'$ have strictly positive real parts throughout $\Delta^*$, the non-trivial phase difference between $F'(z)$ and $G'(z)$ causes $\mu(z)$ to rotate their combination outside the right half-plane near the boundary.

\end{document}